	\newtheorem{theorem}{Theorem}[section]
	\newaliascnt{lemma}{theorem}
	\newtheorem{lemma}[lemma]{Lemma}
	\newaliascnt{corollary}{theorem}
	\newtheorem{corollary}[corollary]{Corollary}
	\newaliascnt{proposition}{theorem}
	\newtheorem{proposition}[proposition]{Proposition}
	\theoremstyle{definition}
	\newaliascnt{definition}{theorem}
	\newtheorem{definition}[definition]{Definition}
	\newaliascnt{remark}{theorem}
	\newtheorem{remark}[remark]{Remark}
	\newaliascnt{example}{theorem}
	\newtheorem{example}[example]{Example}
	\crefname{theorem}{Theorem}{Theorems}
	\Crefname{theorem}{Theorem}{Theorems}
	\crefname{lemma}{Lemma}{Lemmas}
	\Crefname{lemma}{Lemma}{Lemmas}
	\crefname{corollary}{Corollary}{Corollaries}
	\Crefname{corollary}{Corollary}{Corollaries}
	\crefname{proposition}{Proposition}{Propositions}
	\Crefname{proposition}{Proposition}{Propositions}
	\crefname{definition}{Definition}{Definitions}
	\Crefname{definition}{Definition}{Definitions}
	\crefname{remark}{Remark}{Remarks}
	\Crefname{remark}{Remark}{Remarks}
	\crefname{example}{Example}{Examples}
	\Crefname{example}{Example}{Examples}
	\newcommand{\Z}{\mathbb{Z}}
	\newcommand{\N}{\mathbb{N}}
	\DeclareMathOperator{\ord}{ord}
	\newcommand{\vtwo}{\nu_2}
	\newcommand\primes{\mathcal{P}}
	\newcommand\set{S}
	\newcommand\term[1]{\emph{#1}}
	\newcommand\card[1]{\left\vert #1\right\vert}
	\let\phi\varphi
 \definecolor{codegreen}{rgb}{0,0.6,0}
 \definecolor{codegray}{rgb}{0.5,0.5,0.5}
 \definecolor{codepurple}{rgb}{0.58,0,0.82}
 \definecolor{backcolour}{rgb}{0.98,0.98,0.98}
 \lstdefinestyle{pythonstyle}{
     language=Python,
     backgroundcolor=\color{backcolour},
     commentstyle=\color{codegreen},
     keywordstyle=\color{magenta},
     numberstyle=\tiny\color{codegray},
     stringstyle=\color{codepurple},
     basicstyle=\ttfamily\footnotesize,
     breakatwhitespace=false,
     breaklines=true,
     captionpos=b,
     keepspaces=true,
     numbers=left,
     numbersep=5pt,
     showspaces=false,
     showstringspaces=false,
     showtabs=false,
     tabsize=4,
     frame=single,
     rulecolor=\color{black}
 }
\title{A Flanking Pattern in a Sum-of-Divisors Congruence}
\author[Scott Duke Kominers]{Scott Duke Kominers}
\address{Harvard Business School; Department of Economics and Center of Mathematical Sciences and Applications, Harvard University; and a16z crypto}
\email{kominers@fas.harvard.edu}
\thanks{I gratefully acknowledge helpful conversations with Ben Golub, Ken Ono, Jesse Shapiro, and especially Noam~D.\ Elkies. Additionally, I used LLMs to assist with some of the computations and coding in the preparation of this article, particularly GPT-5-Pro, Claude-Opus-4.1, and Claude-Sonnet-4.5 (all accessed via Poe with the support of Quora, where I am an advisor). The problem, methods, and eventual written form are my own; and of course any errors remain my responsibility.}
\subjclass[2000]{11A07, 11A25, 11A41}
\keywords{prime numbers, congruence conditions, arithmetic functions}
\begin{document}

\begin{abstract}We consider composite $n$ satisfying the congruence \begin{equation*}
n \cdot \sigma_k(n) \equiv 2 \pmod{\phi(n)},
\end{equation*} and show a ``flanking'' structure: $14$ appears in both $\set_{k-1}$ and $\set_{k+1}$ whenever certain values of~$n$ appear in~$\set_k$; and, moreover, $14$ is the only (nontrivial) case of this property. Along the way, we derive a new characterization of the~$n$ that appear in the sets $\set_{k}$.
\end{abstract}

\maketitle

\section{Introduction}\label{sec:intro}

In \cite{gong2010congruence}, we (Gong and the present author) considered the congruence 
\begin{equation}\label{eq:main}
n \cdot \sigma_k(n) \equiv 2 \pmod{\phi(n)},
\end{equation}
where  $\sigma_k$ and $\phi$ denote the \term{sum-of-powers-of-divisors} and \term{totient} functions, respectively, i.e., $\sigma_k(n)=\sum_{d\mid n}d^k$ and $\phi(n)=\card{\{d\in\N:1\leq d\leq n\text{ and }(d,n)=1\}}$. The congruence \eqref{eq:main} to some degree characterizes the primes: it is automatically satisfied by all prime $n$,\footnote{Indeed, when $n$ is prime, we have $\sigma_k(n)= 1^k+n^k$ and $\phi(n)=n-1$; hence, $n\cdot\sigma_k(n)=n(1^k+n^k)=n+n^{k+1}\equiv (1+1) \bmod{(n-1)}=2\bmod{\phi(n)}$.} yet each $k$ admits only finitely many composite solutions. Indeed, in \cite{gong2010congruence}, we showed the following result, building on earlier work of Lescot~\cite{Lescot} and Subbarao~\cite{Subbarao} proving the cases $k\in \{0,1\}$:\footnote{See also \cite{dujella2012variation} for a variation on this congruence.}
\begin{theorem}\label{thm:gk-main}
For any $k\geq 0$, let $\set_k$ be the set of composite $n\in \N$ satisfying~\eqref{eq:main} for a given value of $k$. Then, \begin{enumerate}\item $\set_k\subset 2\primes$, where $\primes$ denotes the set of primes; \item $\set_k$ is finite; and \item the maximal element of $\set_k$ is at most $2^{k+3}+6$.\end{enumerate}
\end{theorem}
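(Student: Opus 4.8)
The plan is to rewrite \eqref{eq:main} as the divisibility $\phi(n)\mid n\sigma_k(n)-2$ and to extract the shape of $n$ from local information at small primes. First I would pin down the prime factorization. If an \emph{odd} prime $p$ satisfies $p^2\mid n$, then $p\mid\phi(n)$, so $p\mid n\sigma_k(n)-2$; since $p\mid n\sigma_k(n)$ as well, this forces $p\mid 2$, a contradiction. Hence the odd part of $n$ is squarefree, and I may write $n=2^a q_1\cdots q_s$ with distinct odd primes $q_i$ and $a\ge 0$.

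The heart of the argument is then a $2$-adic analysis, reducing \eqref{eq:main} modulo $4$ whenever $4\mid\phi(n)$ (so that $\phi(n)\mid n\sigma_k(n)-2$ yields $n\sigma_k(n)\equiv 2\pmod 4$). Writing $\vtwo$ for the $2$-adic valuation, I note $\vtwo(\phi(n))\ge (a-1)^+ + s$, since each $q_i-1$ is even, while $\vtwo(n\sigma_k(n))=a+\sum_i\vtwo(1+q_i^k)$ with every $\vtwo(1+q_i^k)\ge 1$ (as $q_i$ is odd). Since $n\sigma_k(n)\equiv 2\pmod 4$ forces $\vtwo(n\sigma_k(n))=1$, I expect to eliminate every configuration except $a=1,\,s=1$, i.e.\ $n=2p$: the case $a=0,\,s\ge 2$ (odd composite) gives $4\mid\sigma_k(n)$ and so $n\sigma_k(n)\equiv 0\pmod4$; the case $a\ge2,\,s\ge 1$ gives $4\mid n$ and again $n\sigma_k(n)\equiv 0\pmod4$; and $a=1,\,s\ge2$ gives $\vtwo(n\sigma_k(n))\ge 3$. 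The pure prime-power case $n=2^a$ I would treat directly: $2^{a-1}\mid 2^a\sigma_k(n)-2$ forces $a\le 2$, leaving only the composite $n=4=2\cdot2$. This establishes part~(1), namely $\set_k\subset 2\primes$.

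With $n=2p$ in hand, parts~(2) and~(3) should fall out by a one-line reduction. Since $\phi(2p)=p-1$ and $p\equiv 1\pmod{p-1}$, I compute $n\sigma_k(n)=2p(1+2^k)(1+p^k)\equiv 4(1+2^k)\pmod{p-1}$, so \eqref{eq:main} is equivalent to $p-1\mid 2^{k+2}+2$. This immediately yields $p-1\le 2^{k+2}+2$, hence $n=2p\le 2^{k+3}+6$, giving both finiteness and the stated bound.

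I expect the main obstacle to be the bookkeeping in the $2$-adic case analysis: one must track when $4\mid\phi(n)$ actually holds (so that reduction mod~$4$ is licensed) and handle the boundary values $a\in\{0,1\}$, $s\in\{0,1\}$ together with the small solution $n=4$ without gaps. The algebraic content is light; the risk is a missed configuration rather than a hard estimate, and the edge case $k=0$ (where $\sigma_0$ counts divisors) should be checked separately to confirm the valuation bounds still go through.
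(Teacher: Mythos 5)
Your proposal is correct, but a direct comparison is not quite possible: the paper does not prove \cref{thm:gk-main} at all --- it imports it from the earlier work [GK10] and only reproves the $n=2p$ reduction later, in \cref{prop:2p-criterion}. Your second half (computing $n\sigma_k(n)\equiv 4(1+2^k)\pmod{p-1}$ via $p\equiv 1\bmod{\phi(2p)}$, hence $p-1\mid 2^{k+2}+2$ and $2p\le 2^{k+3}+6$) is exactly the paper's \cref{prop:2p-criterion} and its equation \eqref{eq:gkprimediv}, so parts (2) and (3) are in order. The part-(1) argument is your own reconstruction and it is complete: the squarefree-odd-part step ($p^2\mid n$ with $p$ odd forces $p\mid 2$) is right, and your $(a,s)$ case analysis covers every configuration --- $(a=0,s\ge2)$, $(a\ge2,s\ge1)$, $(a=1,s\ge2)$, and $n=2^a$ --- each time first verifying $4\mid\phi(n)$ before reducing mod $4$, leaving only $n=4$ and $n=2p$, both in $2\primes$. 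One small inaccuracy worth fixing in a write-up: the asserted equality $\vtwo(n\sigma_k(n))=a+\sum_i\vtwo(1+q_i^k)$ silently assumes $\sigma_k(2^a)$ is odd, which fails for $k=0$ (where $\sigma_0(2^a)=a+1$); but since every contradiction you derive uses only the lower bound $\vtwo(n\sigma_k(n))\ge a+\sum_i\vtwo(1+q_i^k)$, and extra factors of $2$ only help, nothing breaks --- you already flag this edge case yourself. Net effect: your route is self-contained where the paper simply cites [GK10], at the cost of the 2-adic bookkeeping you anticipated.
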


We observed moreover that $n\in \set_k$ if and only if $n=2p$ for prime $p$ satisfying
\begin{equation}
p-1 \mid \bigl(2^{k+2}+2\bigr),
\label{eq:gkprimediv}
\end{equation} which implies that any $n\in\N$ that is in $\set_k$ for \textit{some} $k\geq 0$ must be in infinitely many of the sets $\{\set_{k'}\}_{k'\geq 0}$ \cite[Corollary~3]{gong2010congruence}.

In this note, we refine the characterization of the composite $n$ that satisfy~\eqref{eq:main}, and use our refinement to show a ``flanking'' pattern that appears across the sets $\set_k$. 

\subsection{The Exceptional Sets $\set_k$}

\Cref{tab:exceptional_sets} reproduces and slightly corrects and extends a table due to \cite{gong2010congruence} presenting the first few exceptional sets $\set_k$.\footnote{Our original computation \cite[Table 1]{gong2010congruence} somehow missed precisely the largest exceptional $n$ in a few of the sets $\set_k$ with $0\leq k\leq 14$; we correct the computation here, and present the code used to generate~\cref{tab:exceptional_sets} in an Appendix.}

\begin{table}[htb]
\centering
\begin{tabular}{c|l}
$k$ & $S_k$ \\ \hline
0 & \{4,6,14\} \\
1 & \{4,6,22\} \\
2 & \{4,6,14,38\} \\
3 & \{4,6\} \\
4 & \{4,6,14,46,134\} \\
5 & \{4,6,22,262\} \\
6 & \{4,6,14\} \\
7 & \{4,6\} \\
8 & \{4,6,14,38\} \\
9 & \{4,6,22,166\} \\
10 & \{4,6,14,2734,8198\} \\
11 & \{4,6\} \\
12 & \{4,6,14\} \\
13 & \{4,6,22,118,454,65542\} \\
14 & \{4,6,14,38,46,134,398,3974,14566,131078\} \\
15 & \{4,6\} \\
16 & \{4,6,14,174766,524294\} \\
17 & \{4,6,22,262,2182\} \\
18 & \{4,6,14\} \\
19 & \{4,6\} \\
20 & \{4,6,14,38\} \\
21 & \{4,6,22\} \\
22 & \{4,6,14,11184814\} \\
23 & \{4,6,691846\} \\
24 & \{4,6,14,46,134,1006,33134,178246\} \\
25 & \{4,6,22,214,3142\} \\
26 & \{4,6,14,38,326,6158,536870918\} \\
27 & \{4,6\} \\
28 & \{4,6,14,2147483654\} \\
29 & \{4,6,22,166,262,10006,130054,1611622,171798694\} \\
30 & \{4,6,14\} \\
31 & \{4,6,2566\} \\
32 & \{4,6,14,38,2734,8198\} \\
33 & \{4,6,22,3814,526342\} \\
34 & \{4,6,14,46,134,1126,1894,344686,489106598\} \\
35 & \{4,6,29446,2634118\} \\
\end{tabular}\medskip
\caption{The exceptional sets $S_k$ for $0 \leq k \leq 35$.}
\label{tab:exceptional_sets}
\end{table}

Examining~\cref{tab:exceptional_sets} we see immediately that $4$ and $6$ appear in all of the exceptional sets, which is already known to be a general phenomenon (see \cite[Section~3]{gong2010congruence}). Moreover, $22$ appears with regular frequency in all of the sets shown, and when it does appear, it is always ``flanked'' by $14$: i.e., when $22\in \set_k$, we always have $\set_{k-1}\ni 14\in\set_{k+1}$. We show that this pattern is general, and in a certain sense unique to $14$---while being emphatically not unique to~$22$.

\subsection{Flanking in the Exceptional Sets}

\newcommand\dist{\ell}

\begin{definition}
Formally, for $\dist\in\N$ and $k\geq\dist$, we say that $n_*$ \term{flanks $n$ at distance $\dist$ at $k$} if $n\in \set_k$ and we have $n_*\in \set_{k-\dist}$ and $n_*\in \set_{k+\dist}$. If $n_*$ flanks $n$ at distance $\dist$ at $k$ for \textit{every} $k$ for which $n\in \set_k$ (and there is at least one such $k$), then we say that $n_*$ \term{(nontrivially) flanks $n$ at every occurrence}.
\end{definition} 

In the sequel, we prove:
\begin{enumerate}
	\item $14$ flanks $22$ at distance $1$ at every occurrence (a warm-up, shown in~\cref{cor:14-22}).
	\item $14$ is the \textit{only} integer that can flank nontrivially at distance $1$ [other than $4$ and $6$, which do so mechanically because they are in all of the sets $\set_k$] (\cref{cor:only-6-14});
	\item $14$ nontrivially flanks $2p$ at distance $1$ at every occurrence if and only if  $-1 \in \langle 2 \rangle$ in $(\Z/r\Z)^\times$ and the least $t_0$ with $2^{t_0}\equiv -1 \bmod r$ is even (\cref{thm:main}).
\end{enumerate}
Along the way, we give a precise characterization of membership in $\set_k$ (\cref{thm:occurrence-criterion}) and derive a partial characterization of when flanking is possible in general (\cref{lem:flanking-distance-constraint}). 

\section{A Refined Criterion for Membership in $\set_k$}\label{sec:criterion}

\newcommand{\az}{a}
\newcommand{\ax}{b}

We write $\vtwo(n)$ for the $2$-adic valuation of a positive integer $n$ (i.e., the largest $\az$ for which $2^\az \mid n$). For an integer $\ax$ coprime to $m \ge 2$, we write $\ord_m(\ax)$ for the multiplicative order of $\ax$ modulo $m$. When $p$ is an odd prime and $p \equiv 3 \bmod{4}$, we have $\vtwo(p-1) = 1$ and thus $p-1 = 2r$ with $r$ odd.

Our first goal is to clarify the arithmetic of $\set_k$ for $n=2p$.

\begin{proposition}\label{prop:2p-criterion}
Let $p$ be an odd prime and write $p-1 = 2^{\vtwo(p-1)} r$ with $r$ odd. Then $2p\in \set_k$ if and only if\footnote{It is possible also to derive this criterion from~\eqref{eq:gkprimediv}; we give a direct proof here both for completeness and to illustrate a bit of how we work with the congruence~\eqref{eq:main}.}
\[
p \equiv 3 \pmod{4} \quad \text{and} \quad r \mid \bigl(2^{k+1}+1\bigr).
\]
\end{proposition}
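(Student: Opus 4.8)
The plan is to verify the congruence~\eqref{eq:main} directly for $n=2p$ and then extract the stated divisibility through a $2$-adic analysis. First I would record the two arithmetic functions at $n=2p$: since $2$ and $p$ are distinct primes, multiplicativity gives $\phi(2p)=\phi(2)\phi(p)=p-1$ and $\sigma_k(2p)=\sigma_k(2)\sigma_k(p)=(1+2^k)(1+p^k)$. Because $2p$ is composite (as $p$ is an odd prime), the membership $2p\in\set_k$ is \emph{exactly} the assertion that \eqref{eq:main} holds for this $n$, namely $2p\,(1+2^k)(1+p^k)\equiv 2\pmod{p-1}$.

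Next I would simplify this congruence modulo $p-1$. Since $p\equiv 1\pmod{p-1}$, we have $2p\equiv 2$ and $1+p^k\equiv 2$, so the left-hand side reduces to $2\cdot(1+2^k)\cdot 2=4(1+2^k)$. Thus the congruence is equivalent to $4(1+2^k)\equiv 2\pmod{p-1}$, i.e.\ to $2^{k+2}+2\equiv 0\pmod{p-1}$, which is the divisibility $p-1\mid 2^{k+2}+2$. Every step here is an equivalence, so this recovers~\eqref{eq:gkprimediv} intrinsically, and it remains only to rewrite this single divisibility in the form claimed.

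For the final step I would separate the $2$-part from the odd part. Write $p-1=2^{\vtwo(p-1)}r$ with $r$ odd, and factor $2^{k+2}+2=2(2^{k+1}+1)$, noting that $2^{k+1}+1$ is odd so that $\vtwo(2^{k+2}+2)=1$. Since $p$ is odd we have $\vtwo(p-1)\ge 1$, while $p-1\mid 2(2^{k+1}+1)$ forces $\vtwo(p-1)\le \vtwo(2^{k+2}+2)=1$; hence $\vtwo(p-1)=1$, which is precisely the condition $p\equiv 3\pmod 4$, and in that case $p-1=2r$. Cancelling the factor of $2$ and using $\gcd(r,2)=1$, the divisibility $2r\mid 2(2^{k+1}+1)$ becomes $r\mid 2^{k+1}+1$. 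Running the implications in reverse yields the converse: if $p\equiv 3\pmod 4$ (so $p-1=2r$) and $r\mid 2^{k+1}+1$, then $2r\mid 2(2^{k+1}+1)=2^{k+2}+2$, so the congruence holds and $2p\in\set_k$.

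The computation is essentially mechanical; the only point demanding care is the $2$-adic bookkeeping in the last step---cleanly isolating that $p-1\mid 2(2^{k+1}+1)$ simultaneously pins $\vtwo(p-1)$ to $1$ (equivalently $p\equiv 3\pmod 4$) and, after cancelling the common factor of $2$, reduces the odd part to $r\mid 2^{k+1}+1$. I expect no genuine obstacle beyond keeping each reduction a true ``if and only if.''
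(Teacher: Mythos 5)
Your proposal is correct and follows essentially the same route as the paper's proof: compute $\sigma_k(2p)$ and $\phi(2p)$ by multiplicativity, reduce the congruence modulo $p-1$ to $p-1\mid 2(2^{k+1}+1)$, and then use the $2$-adic valuation to force $\vtwo(p-1)=1$ and extract $r\mid(2^{k+1}+1)$. The only cosmetic difference is that you pass explicitly through the form $p-1\mid 2^{k+2}+2$ of~\eqref{eq:gkprimediv}, which the paper mentions only in a footnote.
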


\begin{proof}
Using multiplicativity of $\sigma_k$, we have
\[
\sigma_k(2p)=(1+2^k)(1+p^k).
\]
As $\phi(2p)=p-1$, we have $p\equiv 1\bmod \phi(2p)$, so $1+p^k\equiv 2\bmod \phi(2p)$ and $2p\equiv 2\bmod \phi(2p)$. Therefore
\begin{equation}
2p\,\sigma_k(2p)\equiv 2\cdot(1+2^k)\cdot 2 \equiv 4(1+2^k)\pmod{p-1}.
\label{eq:intermediate}
\end{equation}
Bringing~\eqref{eq:intermediate} to~\eqref{eq:main}, we see that $2p\in \set_k$ if and only if
\begin{equation}
4(1+2^k)\equiv 2 \pmod{p-1}\ \iff\ 2\bigl(2^{k+1}+1\bigr)\equiv 0\pmod{p-1}.
\label{eq:intermediate2}
\end{equation}
Writing $p-1=2^\az r$ with $r$ odd and $\az = \vtwo(p-1)$, we need $2^{\az} r \mid 2(2^{k+1}+1)$. Since $2^{k+1}+1$ is odd, we must have $2^\az \mid 2$; hence, $\az \leq 1$. As $p-1$ is even for $p>2$, we must in fact then have $\az = 1$, i.e.\ $p \equiv 3 \pmod{4}$ and $p-1=2r$ with $r$ odd; under this decomposition, the condition~\eqref{eq:intermediate2} requires $r\mid(2^{k+1}+1)$.
\end{proof}

Two immediate corollaries recover and streamline the cases $n=14$ and $n=22$.

\begin{corollary}\label{cor:14-membership}
$14\in \set_k$ if and only if $k$ is even.
\end{corollary}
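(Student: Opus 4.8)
The plan is to apply \cref{prop:2p-criterion} directly with $p = 7$, since $14 = 2\cdot 7$. First I would factor $p-1 = 6 = 2\cdot 3$, reading off $\vtwo(p-1) = 1$ and the odd part $r = 3$. I would then verify the first hypothesis of the proposition, namely $p \equiv 3 \pmod 4$: indeed $7 \equiv 3 \pmod 4$, so this condition is satisfied outright and imposes no constraint on $k$.

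The remaining condition supplied by the proposition is $r \mid \bigl(2^{k+1}+1\bigr)$, i.e.\ $3 \mid \bigl(2^{k+1}+1\bigr)$. To resolve this I would reduce modulo $3$, using $2 \equiv -1 \pmod 3$, so that
\[
2^{k+1} + 1 \equiv (-1)^{k+1} + 1 \pmod 3.
\]
This vanishes exactly when $k+1$ is odd—equivalently, when $k$ is even—and equals $2 \bmod 3$ when $k$ is odd. Combining this with the (automatically satisfied) congruence on $p$, I conclude that $14 \in \set_k$ if and only if $k$ is even.

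I expect no real obstacle here: the statement is a one-line specialization of \cref{prop:2p-criterion}, and the only content is the elementary observation that $\ord_3(2) = 2$, which governs the parity. Phrased this way, $3 \mid 2^{k+1}+1$ holds precisely when $2^{k+1} \equiv -1 \pmod 3$, and since $\ord_3(2) = 2$ this occurs exactly when $k+1$ is odd, i.e.\ when $k$ is even.
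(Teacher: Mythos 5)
Your proposal is correct and follows essentially the same route as the paper: specialize \cref{prop:2p-criterion} to $p=7$, $r=3$, and use $2\equiv -1\pmod 3$ to see that $3\mid(2^{k+1}+1)$ exactly when $k+1$ is odd. The only (harmless) difference is that you explicitly check $7\equiv 3\pmod 4$, which the paper leaves implicit.
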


\begin{proof}
Here with $14=2p$, we have $p=7$, so in the decomposition of~\cref{prop:2p-criterion}, $r=(7-1)/2=3$, and so $14\in \set_k$ if and only if $3\mid (2^{k+1}+1)$. Since $2\equiv -1\bmod 3$, we have $3\mid (2^{k+1}+1)$ if and only if $(-1)^{k+1}\equiv -1$, i.e., if and only if $k$ is even.
\end{proof}

\begin{corollary}\label{cor:22-membership}
$22\in \set_k$ if and only if $k\equiv 1\bmod 4$.
\end{corollary}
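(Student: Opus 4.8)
The plan is to apply \cref{prop:2p-criterion} directly, in exact parallel with the proof of \cref{cor:14-membership}. First I would write $22 = 2p$ with $p = 11$, so that $p - 1 = 10 = 2 \cdot 5$; thus in the decomposition of \cref{prop:2p-criterion} we have $\vtwo(p-1) = 1$—equivalently $p \equiv 3 \pmod 4$, since $11 \equiv 3 \pmod 4$—and $r = 5$. The first condition of the proposition ($p \equiv 3 \bmod 4$) is therefore automatically satisfied, and membership $22 \in \set_k$ reduces to the single divisibility $5 \mid (2^{k+1} + 1)$.

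Next I would resolve this divisibility by working in $(\Z/5\Z)^\times$. The condition $5 \mid (2^{k+1} + 1)$ is equivalent to $2^{k+1} \equiv -1 \equiv 4 \pmod 5$. Since $\ord_5(2) = 4$ (the successive powers of $2$ cycle as $2, 4, 3, 1$), the residue $2^{k+1} \bmod 5$ depends only on $(k+1) \bmod 4$, and it equals $4$ precisely when $k + 1 \equiv 2 \pmod 4$. Rearranging yields $k \equiv 1 \pmod 4$, which is exactly the claimed criterion.

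There is no genuine obstacle here: the result is a short, direct corollary of \cref{prop:2p-criterion}. The only difference from the $14$ case—where $r = 3$ and $2$ has order $2$, so that the sign $(-1)^{k+1}$ settles matters in a single line—is that here the relevant order is $4$. Consequently the sole point requiring mild care is identifying which power of $2$ hits the residue $-1$ modulo $5$, and this is dispatched immediately by the short cycle computation above.
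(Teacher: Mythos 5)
Your proposal is correct and follows essentially the same route as the paper: apply \cref{prop:2p-criterion} with $p=11$, $r=5$, reduce membership to $5\mid(2^{k+1}+1)$, and use $\ord_5(2)=4$ to conclude $k+1\equiv 2\pmod 4$, i.e., $k\equiv 1\pmod 4$. The extra detail you supply (the explicit cycle $2,4,3,1$ identifying $2^2\equiv -1$) is exactly the computation the paper leaves implicit.
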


\begin{proof}
Here with $22=2p$, we have $p=11$, so in the decomposition of~\cref{prop:2p-criterion}, $r=5$, and so $22\in \set_k$ if and only if $5\mid (2^{k+1}+1)$. Since $\ord_5(2)=4$, the condition is equivalent to $k+1\equiv 2\bmod 4$, i.e., $k\equiv 1\bmod 4$.
\end{proof}

\Cref{cor:14-membership,cor:22-membership} show that the $14$--$22$--$14$ flanking relationship observed in~\cref{tab:exceptional_sets} is robust, and in fact appears with period $4$ in the $\set_k$: $22\in \set_k$ if and only if $k\equiv 1\bmod 4$, and $14\in \set_k$ if and only if $k\equiv 0 \text{ or } 2\bmod 4$, so every time $22$ appears in $\set_k$, it is flanked by $\set_{k-1}\ni 14\in\set_{k+1}$.

\begin{corollary}\label{cor:14-22}
$14$ flanks $22$ at distance $1$ at every occurrence.
\end{corollary}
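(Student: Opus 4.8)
The plan is to deduce this directly from the two membership characterizations already in hand, namely \cref{cor:14-membership,cor:22-membership}. Unwinding the definition of flanking at every occurrence, I need to check two things: first, that there is at least one $k$ with $22 \in \set_k$ (the nontriviality requirement); and second, that for \emph{every} such $k$, both $14 \in \set_{k-1}$ and $14 \in \set_{k+1}$.

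First I would invoke \cref{cor:22-membership}, which says that $22 \in \set_k$ exactly when $k \equiv 1 \pmod 4$. This immediately settles nontriviality, since for instance $k = 1$ qualifies (and indeed $22 \in \set_1$ in \cref{tab:exceptional_sets}); it also guarantees that $k \geq 1 = \dist$ at every occurrence, so the distance-$1$ neighbours $\set_{k\pm 1}$ are defined. The key structural observation is that any such $k$ is in particular \emph{odd}, so that $k - 1$ and $k + 1$ are both even.

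Then I would apply \cref{cor:14-membership}, which gives $14 \in \set_m$ for every even $m$. Since $k \pm 1$ are even whenever $22 \in \set_k$, this yields $14 \in \set_{k-1}$ and $14 \in \set_{k+1}$ simultaneously, which is exactly the assertion that $14$ flanks $22$ at distance $1$ at this---and hence at every---occurrence.

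There is no genuine obstacle here: the real work has already been carried out in establishing the two periodic membership criteria via \cref{prop:2p-criterion}, and the corollary is simply the observation that the residue class $k \equiv 1 \pmod 4$ of $22$-occurrences sits strictly inside the odd integers, whose even neighbours are precisely where $14$ lives. The only point requiring a moment's care is confirming the nontriviality clause of the definition, which the explicit witness $k = 1$ dispatches.
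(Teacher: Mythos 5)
Your argument is correct and is essentially the paper's own: the text preceding \cref{cor:14-22} derives it in exactly this way, combining \cref{cor:22-membership} ($22\in \set_k$ iff $k\equiv 1\bmod 4$, hence $k$ odd) with \cref{cor:14-membership} ($14\in \set_k$ iff $k$ even) so that $k\pm 1$ are both even at every occurrence. Your explicit check of the nontriviality clause via the witness $k=1$ is a small, welcome addition of care, but the route is the same.
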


Note also that the fact that we have $6\in \set_k$ for every $k\ge 0$ follows from~\cref{prop:2p-criterion} with $p=3$ (then $r=1$).

\section{A General Constraint on Flanking}

Building on~\cref{prop:2p-criterion}, we obtain a general necessary condition for distance-$\dist$ flanking.

\begin{lemma}\label{lem:flanking-distance-constraint}
Let $p_1\neq p_2$ be odd primes and suppose $2p_1$ (nontrivially) flanks $2p_2$ at distance $\dist\geq 1$ and set $r_1=(p_1-1)/2$. Then:
\[
r_1 \mid \bigl(2^{2\dist}-1\bigr).
\]
\end{lemma}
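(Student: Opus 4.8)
The plan is to translate the flanking hypothesis into divisibility statements via \cref{prop:2p-criterion} and then read off $r_1 \mid (2^{2\dist}-1)$ from a single subtraction. First I would fix a convenient witness. By hypothesis there is at least one $k \geq \dist$ with $2p_2 \in \set_k$; fix one such $k$. Since $2p_1$ flanks $2p_2$ at every occurrence, at this $k$ we have both $2p_1 \in \set_{k-\dist}$ and $2p_1 \in \set_{k+\dist}$. (Such witnesses in fact recur: writing $p_2-1 = 2^{\vtwo(p_2-1)}r_2$ with $r_2$ odd, \cref{prop:2p-criterion} shows that $2p_2 \in \set_k$ is equivalent to $r_2 \mid (2^{k+1}+1)$, a condition that is periodic in $k$.) Note that $k \geq \dist \geq 1$ guarantees the exponents $k-\dist+1$ and $k+\dist+1$ are both nonnegative.

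Next I would apply \cref{prop:2p-criterion} to the two flanking memberships $2p_1 \in \set_{k-\dist}$ and $2p_1 \in \set_{k+\dist}$. The criterion forces $p_1 \equiv 3 \pmod 4$, so that $p_1-1 = 2r_1$ with $r_1$ odd (matching the definition of $r_1$ in the statement), and it yields the two divisibilities
\[
r_1 \mid \bigl(2^{k-\dist+1}+1\bigr) \qquad\text{and}\qquad r_1 \mid \bigl(2^{k+\dist+1}+1\bigr).
\]

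Finally I would subtract. Since $r_1$ divides both quantities, it divides their difference,
\[
\bigl(2^{k+\dist+1}+1\bigr)-\bigl(2^{k-\dist+1}+1\bigr) = 2^{k-\dist+1}\bigl(2^{2\dist}-1\bigr).
\]
Because $r_1$ is odd, it is coprime to $2^{k-\dist+1}$, and hence $r_1 \mid (2^{2\dist}-1)$, as desired. I do not expect any genuine obstacle: once the membership criterion is in hand, the argument is essentially a one-line cancellation. The only point requiring mild care is the existence of a witness $k \geq \dist$, which the hypothesis supplies directly (and which the periodicity of the membership condition reinforces), together with the bookkeeping that keeps the relevant exponents nonnegative.
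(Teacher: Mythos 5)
Your proposal is correct and follows essentially the same route as the paper: invoke \cref{prop:2p-criterion} at $k-\dist$ and $k+\dist$ to get $r_1 \mid (2^{k-\dist+1}+1)$ and $r_1 \mid (2^{k+\dist+1}+1)$, then eliminate the $k$-dependence. The only (immaterial) difference is that the paper subtracts $2^{2\dist}\bigl(2^{k-\dist+1}+1\bigr)$ from $\bigl(2^{k+\dist+1}+1\bigr)$ to land directly on $1-2^{2\dist}$, whereas you subtract the two quantities directly and then cancel the factor $2^{k-\dist+1}$ using the oddness of $r_1$ --- both steps are valid.
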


\begin{proof}
Since the flanking is nontrivial by hypothesis, there exists $k$ with $2p_2\in \set_k$ and $\set_{k-\dist} \ni 2p_1 \in \set_{k+\dist}$. Then, by~\cref{prop:2p-criterion}, we have $r_1=(p_1-1)/2$ odd and
\[
r_1 \mid \bigl(2^{k-\dist+1}+1\bigr)\quad\text{and}\quad r_1 \mid \bigl(2^{k+\dist+1}+1\bigr).
\] 
We now consider the identity
\begin{equation}
\bigl(2^{k+\dist+1}+1\bigr) - 2^{2\dist}\bigl(2^{k-\dist+1}+1\bigr) = 1-2^{2\dist}:
\label{eq:identity}
\end{equation}
since $r_1$ divides both terms on the left of~\eqref{eq:identity}, it must divide their difference. Hence, we see that $r_1\mid(2^{2\dist}-1)$.
\end{proof}

For distance $\dist=1$,~\cref{lem:flanking-distance-constraint} pins down the only possible flankers.

\begin{corollary}\label{cor:only-6-14}
For odd primes $p_1\neq p_2$, if $2p_1$ flanks $2p_2$ at distance $1$ at any $k$, then $p_1\in\{3,7\}$. In other words, the only composite integers $n>4$ that can flank nontrivially at distance $1$ are $6$ and $14$.
\end{corollary}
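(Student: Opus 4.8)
The plan is to invoke \cref{lem:flanking-distance-constraint} with $\dist = 1$ and then simply enumerate the divisors it produces. First I would unpack the hypothesis: the statement that $2p_1$ flanks $2p_2$ at distance $1$ at some $k$ means precisely that there is a value of $k$ with $2p_2\in\set_k$ and $2p_1\in\set_{k-1}\cap\set_{k+1}$, which is exactly the input the lemma requires (its proof consumes only a single witnessing $k$, so no appeal to flanking ``at every occurrence'' is needed for this necessity direction). Setting $r_1=(p_1-1)/2$ and applying the lemma then gives $r_1\mid(2^{2\dist}-1)=(2^2-1)=3$.

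Next I would read off the finitely many possibilities. Since $r_1$ is a positive divisor of $3$, we must have $r_1\in\{1,3\}$, and recovering $p_1=2r_1+1$ yields $p_1\in\{3,7\}$; correspondingly $2p_1\in\{6,14\}$. As a consistency check I would observe that both $3$ and $7$ are $\equiv 3\pmod 4$, so they are compatible with the membership criterion of~\cref{prop:2p-criterion} (which any genuine flanker must satisfy), and that both $6$ and $14$ are indeed composite and exceed $4$, matching the restriction in the statement.

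I do not anticipate any real obstacle: the corollary is an immediate specialization of~\cref{lem:flanking-distance-constraint}, and the only substantive arithmetic fact is that $2^{2}-1=3$ admits just the two divisors $1$ and $3$. The one point worth stating explicitly is the bookkeeping in the previous paragraph---that each admissible value of $r_1$ corresponds to a legitimate odd prime $p_1$ of the required residue class---so that the divisibility constraint from the lemma translates cleanly into the exhaustive list $\{6,14\}$.
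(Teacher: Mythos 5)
Your proposal is correct and follows exactly the paper's own argument: specialize \cref{lem:flanking-distance-constraint} to $\dist=1$ to get $r_1\mid 3$, then enumerate $r_1\in\{1,3\}$ to conclude $p_1\in\{3,7\}$. The additional remarks (that a single witnessing $k$ suffices for the lemma, and the residue-class consistency check) are harmless elaborations of the same proof.
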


\begin{proof}
With $\dist=1$,~\cref{lem:flanking-distance-constraint} gives $r_1\mid 3$. Since $r_1=(p_1-1)/2$ is a positive integer, we must have $r_1\in\{1,3\}$, i.e., $p_1\in\{3,7\}$.
\end{proof}

As $4\in \set_k$ and $6\in \set_k$ for all $k$, we see that $4$ and $6$ automatically flank every integer that ever appears in one of the exceptional sets $\set_k$. Thus, if we abuse terminology slightly for a moment, we conclude that the only ``nontrivial'' nontrivial distance-1 flanker is $14$. 

\section{When does $14$ flank $2p$ at distance $1$?}

We now classify exactly which $2p$ are nontrivially flanked by $14$.

\begin{theorem}\label{thm:14-flanking-characterization}
Let $p\geq 5$ be an odd prime with $p \equiv 3 \pmod{4}$ and write $p-1=2r$ with $r>1$ odd. Set $m := \ord_r(2)$. Then $14$ nontrivially flanks $2p$ at distance $1$ at some $k$ if and only if there exists an even integer $t_0 \ge 2$ such that
\[
2^{t_0} \equiv -1 \pmod{r}.
\]
In this case, for the minimal such $t_0$, we have $m = 2t_0$, and the least $k \ge 0$ with $r \mid (2^{k+1}+1)$ is
\[
k_{\min} := t_0 - 1,
\]
and all solutions are given by $$k \equiv t_0 - 1 \pmod{2t_0}.$$
Moreover, if $r$ is prime, then the condition on $r$ is equivalent to the requirement that $4 \mid \ord_r(2)$.
\end{theorem}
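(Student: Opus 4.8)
The plan is to convert the flanking hypothesis directly into a congruence condition on $k$ and then read off everything from the cyclic subgroup $\langle 2\rangle\subseteq(\Z/r\Z)^\times$. By \cref{prop:2p-criterion}, membership $2p\in\set_k$ is equivalent to $2^{k+1}\equiv -1\pmod r$ (recall $p\equiv 3\bmod 4$ and $p-1=2r$), while by \cref{cor:14-membership} we have $14\in\set_{k-1}$ and $14\in\set_{k+1}$ precisely when $k-1$ and $k+1$ are both even, i.e.\ when $k$ is odd. Hence $14$ flanks $2p$ at distance $1$ at $k$ if and only if $k$ is odd and $2^{k+1}\equiv -1\pmod r$. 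Writing $t:=k+1$, this says $t$ is even, $t\ge 2$, and $2^{t}\equiv -1\pmod r$; so flanking occurs at \emph{some} $k$ exactly when an even $t_0\ge 2$ with $2^{t_0}\equiv -1\pmod r$ exists, which is the asserted equivalence (and $t_0\ge 2$ guarantees $k_{\min}=t_0-1\ge 1$, so the distance-$1$ flanking is well posed).

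The core of the argument is then a parity analysis of the solution set of $2^{s}\equiv -1\pmod r$. Since $r\ge 3$ is odd, $-1$ has order $2$ in $(\Z/r\Z)^\times$; thus if $-1\in\langle 2\rangle$ then this cyclic group of order $m=\ord_r(2)$ contains an element of order $2$, forcing $m$ to be even, with $2^{m/2}$ its unique order-$2$ element and therefore $2^{m/2}\equiv -1$. Consequently the solutions of $2^{s}\equiv -1\pmod r$ are exactly $s\equiv m/2\pmod m$, namely $s\in\{m/2,\,3m/2,\,5m/2,\dots\}$. These all share the parity of $m/2$ (consecutive ones differ by the even number $m$), so an even solution exists if and only if $m/2$ is even, i.e.\ $4\mid m$; and when it exists the smallest even solution is the smallest solution $m/2$. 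This yields $t_0=m/2$, hence $m=2t_0$ as claimed. The step I expect to be the main obstacle is exactly this bookkeeping: one must take the even solution supplied by hypothesis, run it backwards through ``solutions share the parity of $m/2$'' to deduce $4\mid m$, and only then identify $t_0$ with $m/2$.

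With $m=2t_0$ in hand the remaining quantitative claims are immediate. The least $k\ge 0$ with $2^{k+1}\equiv -1\pmod r$ comes from the least positive solution $k+1=m/2=t_0$, giving $k_{\min}=t_0-1$; and the full solution set $k+1\equiv m/2\pmod m$ becomes $k\equiv t_0-1\pmod{2t_0}$. As a consistency check, every such $k$ is odd (since $t_0-1$ is odd and $2t_0$ is even), so the ``$k$ odd'' requirement from the first paragraph is automatic—confirming that flanking holds at \emph{all} these $k$, not merely that $2p\in\set_k$.

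Finally, for the refinement under $r$ prime, I would note that the forward implication is unconditional: any even $t_0$ with $2^{t_0}\equiv-1$ forces $4\mid m$ by the parity analysis above. For the converse, primality is exactly what guarantees $-1\in\langle 2\rangle$ once $m$ is even: when $r$ is prime, $(\Z/r\Z)^\times$ is cyclic and $-1$ is its \emph{unique} element of order $2$, so an even $m$ makes $\langle 2\rangle$ contain it, i.e.\ $2^{m/2}\equiv -1$; then $4\mid m$ makes $m/2$ even, producing the required even $t_0=m/2$. For composite $r$ this step fails precisely because $2^{m/2}$ can be an order-$2$ element other than $-1$, so $4\mid m$ no longer forces $-1\in\langle 2\rangle$—which is why the equivalence is stated only for prime $r$.
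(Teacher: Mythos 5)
Your proof is correct and takes essentially the same route as the paper's: reduce via \cref{prop:2p-criterion} and \cref{cor:14-membership} to the condition that $k$ is odd and $2^{k+1}\equiv -1\pmod r$, then run the parity analysis on the arithmetic progression of solutions to conclude that flanking occurs exactly when the least $t_0$ is even. The only difference is presentational: you establish $m=2t_0$ by identifying $2^{m/2}$ as the unique order-two element of $\langle 2\rangle$ (which also cleanly isolates where primality of $r$ is needed in the final clause), whereas the paper deduces it from $\ord_r(2)\mid 2t_0$ and $\ord_r(2)\nmid t_0$ for the minimal $t_0$; both are sound.
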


\begin{proof}
By~\cref{prop:2p-criterion}, membership of $2p$ in $\set_k$ is equivalent to the requirement that $r \mid (2^{k+1}+1)$, i.e., \begin{equation}
2^{k+1} \equiv -1 \pmod{r}.
\label{eq:mod_condition}
\end{equation} If~\eqref{eq:mod_condition} has any solution, then the least positive solution is $k_{\min} = t_0-1$, where $t_0$ is the least positive integer with $2^{t_0} \equiv -1 \bmod{r}$, and the full set of solutions is
\[
K(r,t_0):=\{k: k\equiv t_0-1 \bmod{\ord_r(2)}\}.
\]

By~\cref{cor:14-membership}, $14\in \set_j$ if and only if $j$ is even. Therefore, for any $k\in K(r,t_0)$, we have $14\in \set_{k-1}$ and $14\in \set_{k+1}$ if and only if $k-1$ and $k+1$ are even, i.e., if and only if $k$ is odd. Because all $k\in  K(r,t_0)$ share the same parity as $t_0-1$, we see that $14$ flanks $2p$ precisely when $t_0-1$ is odd; equivalently, when $t_0$ is even.

Now, if such an even $t_0$ exists, then $\ord_r(2)$ divides $2t_0$ but not $t_0$; hence, $\ord_r(2)=2t_0$ and the progression becomes $$k \equiv t_0-1 \pmod{2t_0}.$$

Finally, if $r$ is prime, then $(\mathbb{Z}/r\mathbb{Z})^\times$ is cyclic of even order $r-1$. The element $-1$ has order~$2$, and $2^{t_0} \equiv -1 \pmod{r}$ holds if and only if $\ord_r(2)$ is even with $t_0 = \ord_r(2)/2$. The condition that $t_0$ is even is then equivalent to the condition that $4 \mid \ord_r(2)$.
\end{proof}

\begin{remark}
Note that in the case $p=3$ we have $2p=6$ and $p-1=4$, yielding $r=1$ in the setup of~\cref{thm:14-flanking-characterization}, under which the various conditions $\bmod r$ become vacuous. And indeed, as we have already noted, we have $6\in \set_k$ for all $k$, while $14\in \set_k$ exactly for even $k$ (\cref{cor:14-membership}), so $14$ does not flank $6$. This is why the statement of~\cref{thm:14-flanking-characterization} excludes $p=3$ by assuming $p\geq 5$.
\end{remark}

\begin{corollary}\label{cor:mod8}
If $r=(p-1)/2$ is prime and $14$ nontrivially flanks $2p$ at distance $1$, then $p \equiv 3 \bmod{8}$.
\end{corollary}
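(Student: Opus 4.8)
The plan is to reduce everything to a short divisibility chain, leveraging the prime-$r$ equivalence already established in \cref{thm:14-flanking-characterization}. First I would invoke that theorem directly: since $r$ is prime by hypothesis and $14$ nontrivially flanks $2p$ at distance $1$, the final clause of \cref{thm:14-flanking-characterization} gives immediately that $4 \mid \ord_r(2)$.

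Next I would use the primality of $r$ a second time, now to control $\ord_r(2)$ from above. The group $(\Z/r\Z)^\times$ is cyclic of order $r-1$, so by Lagrange's theorem $\ord_r(2) \mid r-1$. Combining this with the lower constraint $4 \mid \ord_r(2)$ yields $4 \mid r-1$, i.e.\ $r \equiv 1 \pmod{4}$.

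Finally I would translate this congruence on $r$ into one on $p$ via the defining relation $p-1 = 2r$. Writing $r = 1 + 4t$, we get $p = 2r+1 = 3 + 8t$, so that $p \equiv 3 \pmod{8}$, as desired.

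As for the main obstacle: there is no deep one here, since the heavy lifting was done in \cref{thm:14-flanking-characterization}. The only points that require care are (i) ensuring that the hypothesis ``$r$ prime'' is used in both places it is needed---once to obtain the equivalence of flanking with $4 \mid \ord_r(2)$, and once to apply Lagrange's theorem to get $\ord_r(2) \mid r-1$---and (ii) keeping the arithmetic $\bmod 8$ straight when passing from $r \equiv 1 \pmod{4}$ to $p = 2r+1$, since a careless step could instead land one at $p \equiv 7 \pmod 8$ or lose the stronger modulus entirely.
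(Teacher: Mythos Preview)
Your proof is correct and follows essentially the same approach as the paper's: invoke the prime-$r$ clause of \cref{thm:14-flanking-characterization} to get $4\mid\ord_r(2)$, deduce $4\mid r-1$ (the paper leaves the Lagrange step implicit where you spell it out), and then pass from $r\equiv 1\bmod 4$ to $p=2r+1\equiv 3\bmod 8$.
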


\begin{proof}
If $r$ is prime and $14$ flanks $2p$, then~\cref{thm:14-flanking-characterization} implies that $4 \mid \ord_r(2)$; hence, $4 \mid (r-1)$. Thus, $r \equiv 1 \bmod{4}$; as $p-1=2r$, this yields $p \equiv 3 \bmod{8}$, as claimed.
\end{proof}

\subsection{Occurrence and Periodicity of $2p$ in the Sets $\set_k$}

Now, we isolate the appearance pattern of $2p$ in the family $\{\set_k\}_{k\ge 0}$.

\begin{theorem}\label{thm:occurrence-criterion}
Let $p\geq 5$ be an odd prime with $p \equiv 3 \pmod{4}$ and write $p-1=2r$ with $r>1$ odd; let $m:=\ord_r(2)$.
Then the following are equivalent:
\begin{enumerate}[(i)]
\item $2p \in \set_k$ for at least one $k\ge 0$.\label{cond:occ-exists}
\item $-1 \in \langle 2\rangle$ in $(\Z/r\Z)^\times$.\label{cond:occ-subgroup}
\item There exists $t_0\ge 1$ with $2^{t_0}\equiv -1 \bmod r$; for the least such $t_0$ we have $m=2t_0$.\label{cond:occ-t0}
\end{enumerate}
When any one of the conditions \ref{cond:occ-exists}--\ref{cond:occ-t0} holds, the set of all $k\ge 0$ with $2p\in \set_k$ is the infinite arithmetic progression characterized by
\[
k \equiv t_0-1 \pmod{2t_0};\qquad\text{equivalently,}\qquad k \equiv \frac{m}{2}-1 \pmod m.
\]
\end{theorem}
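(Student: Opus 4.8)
The plan is to collapse all three conditions onto the single congruence $2^{k+1}\equiv -1\pmod r$ and then read everything off from the cyclic subgroup $\langle 2\rangle\le(\Z/r\Z)^\times$. First I would apply \cref{prop:2p-criterion}: since $p\equiv 3\bmod 4$ is part of the hypothesis, membership $2p\in\set_k$ is equivalent to $r\mid(2^{k+1}+1)$, i.e.\ $2^{k+1}\equiv -1\pmod r$. Because $r$ is odd, $2$ is a unit modulo $r$, so $\langle 2\rangle$ and $m=\ord_r(2)$ make sense; and since $r>1$ is odd we have $r\ge 3$, so $-1\not\equiv 1\pmod r$, a fact I will use repeatedly.

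With $t:=k+1$, condition (i) says that some $t\ge 1$ satisfies $2^t\equiv -1\pmod r$, which is literally the assertion $-1\in\langle 2\rangle$; this gives the equivalence of (i) and (ii) at once, and also shows that the existence clause of (iii) is the same statement. The only genuine content remaining in (iii)---and the step I expect to be the main obstacle---is the order identity $m=2t_0$ for the \emph{least} such $t_0$. This equality really does need minimality: knowing only $m\mid 2t_0$ and $m\nmid t_0$ is not by itself enough to force $m=2t_0$.

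To establish $m=2t_0$ I would argue as follows. From $2^{t_0}\equiv -1$ we get $2^{2t_0}\equiv 1$, so $m\mid 2t_0$; and $2^{t_0}\equiv -1\not\equiv 1$ gives $m\nmid t_0$. As $-1=2^{t_0}$ has order $2$ in $\langle 2\rangle$, the standard order formula gives $m/\gcd(m,t_0)=2$, hence $\gcd(m,t_0)=m/2$; in particular $m$ is even, $\tfrac m2\mid t_0$, and writing $t_0=\tfrac m2 u$ forces $u$ to be odd. Now $2^{m/2}$ has order $2$ (its square is $2^m\equiv 1$ while $2^{m/2}\not\equiv 1$), so it is the unique element of order $2$ in the cyclic group $\langle 2\rangle$ and hence $2^{m/2}\equiv -1\pmod r$. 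Minimality of $t_0$ then gives $t_0\le \tfrac m2$, while $t_0=\tfrac m2 u\ge\tfrac m2$; therefore $t_0=\tfrac m2$, i.e.\ $m=2t_0$.

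For the final claim about the progression I would solve $2^{k+1}\equiv -1\pmod r$ directly. Rewriting the condition as $2^{k+1}\equiv 2^{t_0}\pmod r$ and using $\ord_r(2)=m=2t_0$, it holds exactly when $k+1\equiv t_0\pmod{2t_0}$, that is $k\equiv t_0-1\pmod{2t_0}$; substituting $t_0=m/2$ gives the equivalent description $k\equiv\tfrac m2-1\pmod m$. Since $t_0\ge 1$, the representative $k=t_0-1\ge 0$ already lies in the progression, so whenever the equivalent conditions hold the set of admissible $k$ is a nonempty, and therefore infinite, arithmetic progression with common difference $m$.
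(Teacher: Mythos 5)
Your proof is correct and follows essentially the same route as the paper: reduce membership to $2^{k+1}\equiv-1\pmod r$ via \cref{prop:2p-criterion}, identify conditions (i)--(iii) as restatements of $-1\in\langle 2\rangle$, and read off the arithmetic progression from $\ord_r(2)=2t_0$. The one place you go beyond the paper is the step $m=2t_0$: the paper deduces it from ``$m\mid 2t_0$ and $m\nmid t_0$'' alone, which, as you rightly observe, is not a valid inference in isolation (e.g.\ $m=2$, $t_0=3$ satisfies both divisibility conditions but not $m=2t_0$), whereas your argument via the unique element of order $2$ in the cyclic group $\langle 2\rangle$ together with the minimality of $t_0$ supplies the justification the paper leaves implicit.
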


\begin{proof}
By~\cref{prop:2p-criterion}, $2p\in \set_k$ if and only if $2^{k+1}\equiv -1 \bmod r$, which is equivalent to condition \ref{cond:occ-subgroup} and to the existence of $t_0$ as in condition \ref{cond:occ-t0}. If $t_0$ is the least solution of $2^{t_0}\equiv -1 \bmod r$, then $2^{2t_0}\equiv 1$ while $2^{t_0}\not\equiv 1$, so $\ord_r(2)$ divides $2t_0$ but not $t_0$, forcing $m=2t_0$. The solution set in $k$ then follows immediately, proving both periodicity and infinitude.
\end{proof}

\subsection{Full Characterization}

Combining~\cref{thm:14-flanking-characterization,thm:occurrence-criterion} yields a clean trichotomy:

\begin{theorem}\label{thm:main}
Let $p\geq 5$ be an odd prime with $p \equiv 3 \pmod{4}$ and $r = (p-1)/2$. Exactly one of the following holds:
\begin{enumerate}[(A)]
\item $-1 \notin \langle 2 \rangle$ in $(\mathbb{Z}/r\mathbb{Z})^{\times}$: then $2p$ never appears in any $\set_k$.\label{case:never}
\item $-1 \in \langle 2 \rangle$ and the least $t_0$ with $2^{t_0}\equiv -1 \bmod r$ is odd: then $2p$ appears in infinitely many $\set_k$, but is never flanked by $14$ at distance $1$.\label{case:unflanked}
\item $-1 \in \langle 2 \rangle$ and the least $t_0$ with $2^{t_0}\equiv -1 \bmod r$ is even: then $2p$ appears in infinitely many $\set_k$ and is flanked by $14$ at distance $1$ at every occurrence.\label{case:flanked}
\end{enumerate}
In the case that $r$ is prime, case~\ref{case:flanked} occurs if and only if $4 \mid \ord_r(2)$.
\end{theorem}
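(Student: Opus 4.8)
The plan is to assemble Theorem~\ref{thm:main} purely as a bookkeeping combination of the two results it cites, since all the arithmetic content has already been established. First I would observe that \Cref{thm:occurrence-criterion} gives the exact dichotomy governing \emph{whether} $2p$ ever appears: condition~\ref{cond:occ-subgroup} there, namely $-1\in\langle 2\rangle$ in $(\Z/r\Z)^\times$, is equivalent to $2p\in\set_k$ for some $k$, and its negation is precisely case~\ref{case:never}. So the first step is to note that case~\ref{case:never} is exactly the failure of the subgroup condition, in which case $2p$ never appears by the contrapositive of \Cref{thm:occurrence-criterion}. This disposes of the trichotomy's first branch and confirms that the remaining two cases both fall under the hypothesis $-1\in\langle 2\rangle$.

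Next, assuming $-1\in\langle 2\rangle$, I would invoke \Cref{thm:occurrence-criterion} again to conclude that $2p$ appears in infinitely many $\set_k$ (the progression $k\equiv t_0-1\pmod{2t_0}$ is infinite), so the ``appears in infinitely many $\set_k$'' clause holds in both cases~\ref{case:unflanked} and~\ref{case:flanked}. The split between these two cases is then governed entirely by the parity of the least $t_0$ with $2^{t_0}\equiv -1\bmod r$. Here I would apply \Cref{thm:14-flanking-characterization}, which states that $14$ nontrivially flanks $2p$ at distance $1$ if and only if such a $t_0$ is even; combined with the fact that $t_0$ is well-defined and unique once $-1\in\langle 2\rangle$, this shows the even-$t_0$ subcase is exactly case~\ref{case:flanked} (flanking at every occurrence, since the flanking holds at every $k$ in the progression) and the odd-$t_0$ subcase is exactly case~\ref{case:unflanked} (appearance without flanking). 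That the flanking occurs at \emph{every} occurrence, rather than just some $k$, follows because all $k$ in the progression share the parity of $t_0-1$, as already noted in the proof of \Cref{thm:14-flanking-characterization}.

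To verify the trichotomy is genuinely exclusive and exhaustive, I would check that the three defining conditions partition the hypothesis space: either $-1\notin\langle 2\rangle$, or $-1\in\langle 2\rangle$ with $t_0$ odd, or $-1\in\langle 2\rangle$ with $t_0$ even, and these are mutually exclusive and cover all cases since $t_0$ is uniquely determined in the latter two. Finally, the supplementary claim for $r$ prime is immediate from the last sentence of \Cref{thm:14-flanking-characterization}: when $r$ is prime, evenness of $t_0=\ord_r(2)/2$ is equivalent to $4\mid\ord_r(2)$.

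I do not anticipate a genuine obstacle, as this theorem is a corollary-style synthesis rather than a new argument; the only care needed is in the wording to ensure the infinitude, the ``at every occurrence'' strengthening, and the mutual exclusivity of the three cases are each explicitly tied back to the correct prior statement, so that no gap is left between ``flanks at some $k$'' (the phrasing in \Cref{thm:14-flanking-characterization}) and ``flanks at every occurrence'' (the phrasing required here).
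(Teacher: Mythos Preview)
Your proposal is correct and follows essentially the same approach as the paper: invoke \Cref{thm:occurrence-criterion} to separate case~\ref{case:never} from the other two and to obtain infinitude of appearances, then invoke \Cref{thm:14-flanking-characterization} to split the remaining cases by the parity of $t_0$ and to handle the prime-$r$ addendum. Your write-up is more explicit than the paper's about the ``at every occurrence'' strengthening and the exhaustiveness of the trichotomy, but the underlying argument is identical.
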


\begin{remark}
Note that an implication embedded in~\cref{thm:main} is that if $14$ flanks $n$ at distance $1$ at $k$, \textit{for some $k$}, then  $14$ flanks $n$ at distance $1$ \textit{at every occurrence}. That is, if $n\in \set_k$ and $\set_{k-1}\ni 14\in\set_{k+1}$, then there is no $k'$ such that $n\in \set_{k'}$ but $14\notin[\set_{k'-1}\cap\set_{k'+1}]$.
\end{remark}

\begin{proof}[Proof of~\cref{thm:main}]
By~\cref{thm:occurrence-criterion}, either $2p$ never appears (case \ref{case:never}), or it appears in the progression $k\equiv t_0-1\bmod{2t_0}$ for the least $t_0\ge 1$ with $2^{t_0}\equiv -1\bmod r$. By~\cref{thm:14-flanking-characterization}, $14$ flanks $2p$ at distance~$1$ exactly when $t_0$ is even, yielding cases \ref{case:unflanked} and \ref{case:flanked}. The prime-$r$ clause also follows from~\cref{thm:14-flanking-characterization}.
\end{proof}

\begin{example}
For $p=11$ we have $r=(p-1)/2=5$, $\ord_5(2)=4$, so $t_0=2$ and $k_{\min}=1$; indeed $2^{2} \equiv -1 \pmod{5}$ and $14$ flanks $2p=22$. Moreover, $22 \in \set_k$ for all $k \equiv 1 \pmod{4}$.

The same mechanism applies for $p=59$ ($r=29$, $\ord_{29}(2)=28$, $t_0=14$) yielding $2p=118$, and for $p=83$ ($r=41$, $\ord_{41}(2)=20$, $t_0=10$) yielding $2p=166$; each of these appears in infinitely many $\set_k$, flanked by $14$ at every occurrence.
\end{example}

We implemented a \texttt{Python} script to identify primes $p$ satisfying the conditions required for~$2p$ to be nontrivially flanked by $14$ (see~\cref{sec:code} for source code). 

\newcommand\maxconsidered{10000}

\begin{table}[htb]
\centering
\begin{align*}
\{&22{\scriptsize[1]}, 118{\scriptsize[13]}, 166{\scriptsize[9]}, 214{\scriptsize[25]}, 262{\scriptsize[5]}, 454{\scriptsize[13]}, 502{\scriptsize[49]}, 694{\scriptsize[85]}, 1174{\scriptsize[145]},\\
&2038{\scriptsize[253]}, 2182{\scriptsize[17]}, 2374{\scriptsize[73]}, 2566{\scriptsize[31]}, 2614{\scriptsize[325]}, 2902{\scriptsize[69]}, 3046{\scriptsize[189]},\\
&3142{\scriptsize[25]}, 3238{\scriptsize[201]}, 3622{\scriptsize[89]}, 3814{\scriptsize[33]}, 4054{\scriptsize[45]}, 4918{\scriptsize[613]}, 5638{\scriptsize[351]},\\
&5878{\scriptsize[41]}, 6406{\scriptsize[199]}, 6502{\scriptsize[149]}, 6934{\scriptsize[865]}, 6982{\scriptsize[173]}, 7078{\scriptsize[209]}, 7558{\scriptsize[235]},\\
&7606{\scriptsize[949]}, 7846{\scriptsize[233]}, 7894{\scriptsize[985]}, 8182{\scriptsize[101]}, 8278{\scriptsize[1033]}, 8422{\scriptsize[209]}, 8518{\scriptsize[265]},\\
&8566{\scriptsize[1069]}, 9094{\scriptsize[283]}, 10006{\scriptsize[29]}, 10102{\scriptsize[49]}, 10198{\scriptsize[1273]}, 10774{\scriptsize[1345]},\\
&10966{\scriptsize[1369]}, 11014{\scriptsize[687]}, 11302{\scriptsize[69]}, 12262{\scriptsize[305]}, 12646{\scriptsize[125]}, 13318{\scriptsize[831]},\\
&13558{\scriptsize[241]}, 13654{\scriptsize[1705]}, 14374{\scriptsize[897]}, 14662{\scriptsize[121]}, 14902{\scriptsize[369]}, 15046{\scriptsize[93]},\\
&15286{\scriptsize[381]}, 16294{\scriptsize[1017]}, 16486{\scriptsize[473]}, 16582{\scriptsize[413]}, 16726{\scriptsize[125]}, 17398{\scriptsize[2173]},\\
&17494{\scriptsize[2185]}, 17926{\scriptsize[279]}, 18934{\scriptsize[181]}, 19174{\scriptsize[1197]}, 19606{\scriptsize[545]}, 19702{\scriptsize[489]},\ldots\\
&\}
\end{align*}
\caption{All values $n = 2p$ with $p \leq \maxconsidered$ such that $14$ flanks $n$ at distance~$1$, with first appearance $k_{\min}$ shown in brackets. We found $67$ such values among primes $p \leq \maxconsidered$ with $p \equiv 3 \pmod{{4}}$.}
\label{tab:flanked_values}
\end{table} 

Among primes $p \leq \maxconsidered$, we found $67$ values of $2p$ that are flanked by $14$; these are presented in~\cref{tab:flanked_values} along with the first $k$ for which they appear in $\set_k$ [indicated in brackets]. By construction, all of the identified values satisfy the criteria in~\cref{thm:14-flanking-characterization}. Consistent with~\cref{cor:mod8}, the associated $p$ are $\equiv 3 \bmod{8}$ when $r$ is prime. By~\cref{thm:main}, each of the values in~\cref{tab:flanked_values} appears in infinitely many sets $\set_k$ and is flanked by $14$ at every occurrence.

\section{Concluding Remarks}

It seems likely that the approaches used here could be extended to at least characterize flanking behavior at distance $\dist$ for small $\dist>1$, but we leave such exploration to future work. More ambitiously, we might hope for a (much) tighter bound on the maximum value in $\set_k$ since while the $2^{k+3}+6$ upper bound given in~\cref{thm:gk-main} is sharp in the case that $2^{k+2}+3$ is prime,\footnote{If $2^{k+2}+3$ is prime, then we see immediately that $2(2^{k+2}+3)\in \set_k$ because~\eqref{eq:gkprimediv} is satisfied.} most of the exceptional $n$ we have identified thus far seem quite small relative to the bound. A tighter bound would enable us to enumerate more of the $\set_k$ and look for further patterns. 

Meanwhile, it seems plausible that there would be infinitely many primes $p$ such that $14$ flanks $2p$ at distance $1$, but unfortunately proving this may be beyond current methods. At least in the case of prime $r$, the condition for case~\ref{case:flanked} of~\cref{thm:main} seems closely related to the infinitude of Sophie Germain/safe primes, which is a long-standing open question (see, e.g., \cite[Section~A7]{guy2004unsolved} and \cite[Section~5.II]{ribenboim2012new}). However, we can at least obtain a conditional result in this direction:

\begin{theorem}[conditional on Hardy--Littlewood prime $k$-tuples]\label{thm:HL-infinite}
Assuming the Hardy--Littlewood prime $k$-tuple conjecture for pairs of linear forms, there exist infinitely many primes $p$ such that $14$ flanks $2p$ at distance $1$ at every occurrence.
\end{theorem}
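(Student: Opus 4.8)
The plan is to reduce the statement, via the prime-$r$ clause of \cref{thm:main}, to a simultaneous-primality problem for two linear forms, and then invoke the Hardy--Littlewood conjecture. By \cref{thm:main}, if $p \ge 5$ is prime with $r=(p-1)/2$ prime and $4 \mid \ord_r(2)$, then $2p$ falls in the flanked case, so $14$ flanks $2p$ at distance $1$ at every occurrence. It therefore suffices to exhibit infinitely many primes $p$ for which $r=(p-1)/2$ is prime and $4 \mid \ord_r(2)$---that is, safe primes $p=2r+1$ whose Sophie Germain partner $r$ additionally satisfies the order condition.

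The key simplification is to restrict to $r \equiv 5 \pmod 8$, which makes the order condition automatic and removes any need to control $\ord_r(2)$ directly. First I would observe that for a prime $r \equiv 5 \pmod 8$ we have $\vtwo(r-1)=2$, and by the second supplement to quadratic reciprocity $2$ is a quadratic nonresidue modulo $r$. Since $(\Z/r\Z)^\times$ is cyclic of order $r-1 = 4u$ with $u$ odd, a nonresidue has order not dividing $(r-1)/2 = 2u$; as $\ord_r(2)\mid 4u$ but $\ord_r(2)\nmid 2u$, the $2$-part of $\ord_r(2)$ must be exactly $4$, so $4\mid \ord_r(2)$. Note too that $r\equiv 5\pmod 8$ forces $p=2r+1\equiv 3\pmod 8$, so in particular $p\equiv 3\pmod 4$ as required, consistent with \cref{cor:mod8}. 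Hence every prime $r\equiv 5\pmod 8$ with $2r+1$ also prime yields a valid $p$.

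It remains to produce infinitely many such $r$. Parametrizing $r=8j+5$ gives $p=2r+1=16j+11$, so I would apply the two-form case of the Hardy--Littlewood conjecture to the linear forms $L_1(j)=8j+5$ and $L_2(j)=16j+11$. The required input is admissibility: no prime divides $L_1(j)L_2(j)$ for all $j$. At $q=2$ both forms are identically odd; for an odd prime $q$ each form has a nonzero leading coefficient and hence a unique root modulo $q$, and the two roots coincide only if $-5/8 \equiv -11/16 \pmod q$, i.e.\ $1\equiv 0 \pmod q$, which is impossible. Thus $L_1L_2$ vanishes at exactly $2 < q$ residues modulo every odd $q$, so the pair is admissible. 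The conjecture then yields infinitely many $j$ with $L_1(j),L_2(j)$ both prime; each gives a prime $p=2r+1\equiv 3\pmod 8$ with $r$ prime, $r\equiv 5\pmod 8$, and $4\mid \ord_r(2)$, whence \cref{thm:main} places $2p$ in the flanked case. (The smallest instance, $j=0$, gives $r=5$, $p=11$, $2p=22$, recovering the opening example.)

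The main obstacle is of course that the result is only conditional: the infinitude is out of reach of current unconditional methods, which is exactly why the Hardy--Littlewood hypothesis is assumed. Within the conditional framework the sole nontrivial check is admissibility, the routine computation above; the genuinely useful idea is the congruence restriction $r\equiv 5\pmod 8$, which converts the opaque condition $4\mid \ord_r(2)$ into a single fixed residue class and thereby packages the whole problem as one admissible pair of linear forms. One could equally well use other residue classes of $r$ that force $4\mid \ord_r(2)$, but $r\equiv 5\pmod 8$ is the cleanest.
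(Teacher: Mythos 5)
Your proposal is correct and follows essentially the same route as the paper: both arguments use the admissible pair $L_1(j)=8j+5$, $L_2(j)=16j+11$, exploit that $r\equiv 5\pmod 8$ makes $2$ a quadratic nonresidue with $\vtwo(r-1)=2$ so that $4\mid\ord_r(2)$, and then conclude via the flanking characterization. The only differences are cosmetic---you spell out the admissibility check that the paper asserts, and you route the conclusion through \cref{thm:main} rather than \cref{thm:14-flanking-characterization}.
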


\begin{proof}[Proof (conditional)]
Consider the admissible pair of linear forms
\[
f_1(n)=8n+5,\qquad f_2(n)=16n+11.
\]
Assuming the Hardy--Littlewood conjecture, there are infinitely many integers $n$ for which $r:=f_1(n)$ and $p:=f_2(n)$ are both prime. For each such $n$ we have $p=2r+1$ (i.e., $p$ is a safe prime) and $r\equiv 5\bmod{8}$; hence, $\left(\frac{2}{r}\right)=-1$ and $\vtwo(r-1)=2$. In a cyclic group of order $r-1$, an element that is not a square has order with full $2$-adic valuation, so $4\mid \ord_r(2)$. By~\cref{thm:14-flanking-characterization}, this implies that $14$ flanks $2p$ at distance $1$ at every occurrence. 
\end{proof}

\bibliographystyle{amsalpha}

\providecommand{\bysame}{\leavevmode\hbox to3em{\hrulefill}\thinspace}
\providecommand{\MR}{\relax\ifhmode\unskip\space\fi MR }
\providecommand{\MRhref}[2]{%
  \href{http://www.ams.org/mathscinet-getitem?mr=#1}{#2}
}
\providecommand{\href}[2]{#2}

\newpage\appendix
	\crefalias{section}{appendix}
	\crefname{appendix}{Appendix}{Appendixes}
	\Crefname{appendix}{Appendix}{Appendixes}
	
\section{Source Code for Generating Tables~\ref{tab:exceptional_sets} and~\ref{tab:flanked_values}}\label{sec:code}
%
%

\lstset{style=pythonstyle}
\begin{lstlisting}[caption={\texttt{Python} implementation for computing exceptional sets and values flanked by $14$, as well as this quine of the generator source code.}]
#!/usr/bin/env python3
"""
Exceptional Sets and Flanking Patterns in a Sum-of-Divisors Congruence

This program computes exceptional sets S_k for the congruence
    n * sigma_k(n) == 2 (mod phi(n)),
where sigma_k is the sum of k-th powers of divisors and phi is Euler's totient function.

It also analyzes flanking patterns, showing that 14 flanks certain values at distance 1 in the sequence of exceptional sets.

Implementation notes (matching the present paper's refinements):
- For each k, instead of scanning primes p up to the theoretical bound, we use
  Proposition 2.1: for n = 2p with p an odd prime, we write p-1 = 2r with r odd.
  Then 2p in S_k iff p == 3 (mod 4) and r | (2^(k+1) + 1).
  Thus, for fixed k, we can enumerate r by enumerating divisors of
      M := 2^(k+1)+1,
  and testing p = 2r + 1 for primality.
- For multiplicative orders we use sympy.ntheory (n_order) rather than a custom loop.

The analysis is based on results from [GK10], as well as the results of the present paper.

QED
"""

from __future__ import annotations

from typing import Dict, Iterable, List, Optional, Tuple
import math
import os
import sys

from sympy import divisor_sigma, totient
from sympy.ntheory import factorint, isprime, primerange
from sympy.ntheory.residue_ntheory import n_order

# ============================================================================
# CORE MATHEMATICAL FUNCTIONS
# ============================================================================

def sigma_k(n: int, k: int) -> int:
    """
    Compute sigma_k(n) = sum_{d|n} d^k.

    Uses sympy's divisor_sigma (fast via factorization).
    """
    return int(divisor_sigma(n, k))


def divisors_from_factorint(factors: Dict[int, int]) -> List[int]:
    """
    Generate the (positive) divisors of an integer via factorint dictionary.

    Args:
        factors: dict mapping prime -> exponent, e.g., {3: 2, 5: 1} for 45.

    Returns:
        Sorted list of positive divisors.
    """
    divs = [1]
    for p, e in factors.items():
        p_pows = [p**i for i in range(1, e + 1)]
        divs = [d * pp for d in divs for pp in ([1] + p_pows)]
    return sorted(divs)


def check_membership_2p(p: int, k: int) -> bool:
    """
    Check if 2p is in S_k using the characterization given in Proposition 2.1.

    For odd prime p: write r = (p-1)/2. Then 2p in S_k iff
      p == 3 (mod 4) and r | (2^(k+1) + 1).

    Note: for p = 3 we have r = 1 and the divisibility condition is vacuous
    (and indeed 6 is in S_k for all k).
    """
    if p % 4 != 3:
        return False

    r = (p - 1) // 2
    # Check 2^(k+1) == -1 (mod r); for r=1 Python's pow(..., 1) = 0, and -1 == 0 (mod 1)
    return pow(2, k + 1, r) == (r - 1)


# ============================================================================
# EXCEPTIONAL SET COMPUTATION
# ============================================================================

def find_exceptional_set(k: int) -> List[int]:
    """
    Find all composite n satisfying n * sigma_k(n) == 2 (mod phi(n)).

    Uses the refined structural result (GK10 + Proposition 2.1):
      S_k subset of {4} union {2p : p is an odd prime, p == 3 (mod 4), and r | (2^(k+1)+1) with r=(p-1)/2}.
    So we enumerate r | (2^(k+1)+1) and test p=2r+1 for primality.

    Args:
        k: Non-negative integer index.

    Returns:
        Sorted list of exceptional values for given k.
    """
    max_n = 2 ** (k + 3) + 6
    exceptional: set[int] = set()

    # n = 4 special-case check (kept for robustness)
    n = 4
    phi_n = int(totient(n))
    n_sigma = n * sigma_k(n, k)
    if n_sigma % phi_n == (2 % phi_n):
        exceptional.add(n)

    # Enumerate candidates n = 2p via divisors r of M = 2^(k+1) + 1
    M = 2 ** (k + 1) + 1
    divs_r = divisors_from_factorint(factorint(M))

    for r in divs_r:
        # M is odd, so r is odd,
        # but we guard/confirm.
        if r % 2 == 0:
            continue

        p = 2 * r + 1
        n = 2 * p
        if n > max_n:
            continue
        if isprime(p):
            # For r odd, p = 2r+1 automatically satisfies p == 3 (mod 4),
            # but we keep the logical check explicit.
            if check_membership_2p(p, k):
                exceptional.add(n)

    return sorted(exceptional)


# ============================================================================
# FLANKING ANALYSIS
# ============================================================================

def check_flanking_criterion(p: int) -> Tuple[bool, Optional[int], Optional[int]]:
    """
    Check whether 2p is flanked by 14 at distance 1 using the criteria given in Theorem 4.1.

    For p == 3 (mod 4), write r = (p-1)/2 (odd). Let m = ord_r(2).
    Then:
      - 2^t0 == -1 (mod r) holds iff m is even and 2^(m/2) == -1 (mod r),
        in which case t0 = m/2.
      - 14 flanks 2p at distance 1 iff t0 is even (and r>1).

    Returns:
        (is_flanked, period, k_min) where
          period = 2*t0 (which equals m in the flanking case),
          k_min  = t0 - 1.
    """
    if p % 4 != 3:
        return False, None, None

    r = (p - 1) // 2
    if r <= 1:
        # This is p=3 (r=1), and 14 does not flank 6.
        return False, None, None

    # m = ord_r(2) via sympy.ntheory
    try:
        m = int(n_order(2, r))
    except Exception:
        return False, None, None

    # Existence of t0 with 2^t0 == -1 (mod r) is equivalent to:
    #   m even and 2^(m/2) == -1 (mod r)
    if m % 2 != 0:
        return False, None, None

    t0 = m // 2
    if pow(2, t0, r) != (r - 1):
        return False, None, None

    # Flanking by 14 at distance 1 requires t0 even.
    if t0 % 2 != 0:
        return False, None, None

    k_min = t0 - 1
    period = 2 * t0  # equals m
    return True, period, k_min


def find_flanked_values(max_p: int = 10000) -> List[Tuple[int, int, int]]:
    """
    Find all values 2p (p <= max_p) that are flanked by 14 at distance 1.

    Uses sympy.primerange for prime iteration.
    """
    flanked: List[Tuple[int, int, int]] = []

    print(f"Checking primes up to {max_p} for flanking criterion...")

    checked = 0
    for p in primerange(3, max_p + 1):
        checked += 1
        if checked % 100 == 0:
            print(f"  Checked {checked} primes...")

        is_flanked, period, k_min = check_flanking_criterion(int(p))
        if is_flanked:
            flanked.append((2 * int(p), int(period), int(k_min)))

    print(f"  Total primes checked: {checked}")
    print(f"  Flanked values found: {len(flanked)}")

    return sorted(flanked)


# ============================================================================
# LATEX TABLE GENERATION
# ============================================================================

def generate_exceptional_sets_table(max_k: int = 35) -> str:
    """
    Generate LaTeX table of exceptional sets S_k.
    """
    all_Sk: Dict[int, List[int]] = {}
    for k in range(max_k + 1):
        all_Sk[k] = find_exceptional_set(k)

    lines: List[str] = []
    lines.append("\\begin{table}[htb]")
    lines.append("\\centering")
    lines.append("\\begin{tabular}{c|l}")
    lines.append("$k$ & $S_k$ \\\\ \\hline")

    for k in range(max_k + 1):
        Sk = all_Sk[k]
        set_str = "\\{" + ",".join(map(str, Sk)) + "\\}"
        lines.append(f"{k} & {set_str} \\\\")

    lines.append("\\end{tabular}\\medskip")
    lines.append(
        "\\caption{The exceptional sets $S_k$ for $0 \\leq k \\leq "
        + str(max_k)
        + "$.}"
    )
    lines.append("\\label{tab:exceptional_sets}")
    lines.append("\\end{table}")

    return "\n".join(lines)


def generate_flanking_table(max_p: int = 10000) -> str:
    """
    Generate LaTeX table of values flanked by 14 at distance 1.

    Displays values in a set-like format with k_min annotations in scriptsize.
    """
    flanked_with_data = find_flanked_values(max_p)

    lines: List[str] = []
    lines.append(f"\\newcommand\\maxconsidered{{{max_p}}}")
    lines.append("")
    lines.append("\\begin{table}[htb]")
    lines.append("\\centering")

    # Handle case where no flanked values are found
    if not flanked_with_data:
        lines.append("\\begin{align*}")
        lines.append("\\{\\}")
        lines.append("\\end{align*}")
        lines.append(
            "\\caption{No values $n = 2p$ with $p \\leq \\maxconsidered$ "
            + "such that $14$ flanks $n$ at distance~$1$ were found.}"
        )
        lines.append("\\label{tab:flanked_values}")
        lines.append("\\end{table}")
        return "\n".join(lines)

    # Build the set notation with character-based line breaking
    set_lines: List[str] = []
    current_line = ""
    current_display_length = 0  # approximate displayed character count

    for i, (val, _period, k_min) in enumerate(flanked_with_data):
        val_str = f"{val}{{\\scriptsize[{k_min}]}}"
        display_str = f"{val}[{k_min}]"

        if i < len(flanked_with_data) - 1:
            val_str += ", "
            display_str += ", "

        if current_line and current_display_length + len(display_str) > 80:
            set_lines.append(current_line.rstrip(", "))
            current_line = val_str
            current_display_length = len(display_str)
        else:
            current_line += val_str
            current_display_length += len(display_str)

    if current_line:
        set_lines.append(current_line.rstrip(", "))

    # Build the align* environment
    lines.append("\\begin{align*}")
    
    if len(set_lines) == 1:
        # Single line case
        lines.append("\\{&" + set_lines[0] + "\\}")
    else:
        # Multi-line case
        lines.append("\\{&" + set_lines[0] + ",\\\\")
        for line in set_lines[1:-1]:
            lines.append("&" + line + ",\\\\")
        lines.append("&" + set_lines[-1] + ",\\ldots\\\\")
        lines.append("&\\}")
    
    lines.append("\\end{align*}")

    lines.append(
        "\\caption{All values $n = 2p$ with $p \\leq \\maxconsidered$ "
        + "such that $14$ flanks $n$ at distance~$1$, "
        + "with first appearance $k_{\\min}$ shown in brackets. "
        + f"We found ${len(flanked_with_data)}$ such values among primes "
        + "$p \\leq \\maxconsidered$ with $p \\equiv 3 \\pmod{{4}}$.}"
    )
    lines.append("\\label{tab:flanked_values}")
    lines.append("\\end{table}")

    return "\n".join(lines)


def generate_source_appendix() -> str:
    """
    Generate a LaTeX quine of this source code for the Appendix.

    Returns:
        LaTeX code using listings package.
    """
    if hasattr(sys, "ps1"):
        source_code = "% Source code not available in interactive mode"
    else:
        try:
            current_file = os.path.abspath(__file__)
            with open(current_file, "r", encoding="utf-8") as f:
                source_code = f.read()
        except Exception:
            source_code = "% Could not read source file"

    lines: List[str] = []
    lines.append("% Add to preamble:")
    lines.append("% \\\\usepackage{listings}")
    lines.append("% \\\\usepackage{xcolor}")
    lines.append("% ")
    lines.append("% \\\\definecolor{codegreen}{rgb}{0,0.6,0}")
    lines.append("% \\\\definecolor{codegray}{rgb}{0.5,0.5,0.5}")
    lines.append("% \\\\definecolor{codepurple}{rgb}{0.58,0,0.82}")
    lines.append("% \\\\definecolor{backcolour}{rgb}{0.98,0.98,0.98}")
    lines.append("% ")
    lines.append("% \\\\lstdefinestyle{pythonstyle}{")
    lines.append("%     language=Python,")
    lines.append("%     backgroundcolor=\\\\color{backcolour},")
    lines.append("%     commentstyle=\\\\color{codegreen},")
    lines.append("%     keywordstyle=\\\\color{magenta},")
    lines.append("%     numberstyle=\\\\tiny\\\\color{codegray},")
    lines.append("%     stringstyle=\\\\color{codepurple},")
    lines.append("%     basicstyle=\\\\ttfamily\\\\footnotesize,")
    lines.append("%     breakatwhitespace=false,")
    lines.append("%     breaklines=true,")
    lines.append("%     captionpos=b,")
    lines.append("%     keepspaces=true,")
    lines.append("%     numbers=left,")
    lines.append("%     numbersep=5pt,")
    lines.append("%     showspaces=false,")
    lines.append("%     showstringspaces=false,")
    lines.append("%     showtabs=false,")
    lines.append("%     tabsize=4,")
    lines.append("%     frame=single,")
    lines.append("%     rulecolor=\\\\color{black}")
    lines.append("% }")
    lines.append("")
    lines.append("\\lstset{style=pythonstyle}")
    lines.append(
        "\\begin{lstlisting}[caption={\\texttt{Python} implementation for "
        "computing exceptional sets and values flanked by $14$, as well as "
        "this quine of the generator source code.}]"
    )
    lines.append(source_code)
    lines.append("\\end{lstlisting}")

    return "\n".join(lines)

# ============================================================================
# MAIN EXECUTION
# ============================================================================

def main() -> None:
    """
    Main function to generate all tables.
    """
    print("=" * 60)
    print("EXCEPTIONAL SETS AND FLANKING PATTERNS")
    print("=" * 60)

    # 1. Generate exceptional sets table
    print("\n1. Generating exceptional sets table...")
    exceptional_table = generate_exceptional_sets_table(max_k=35)
    with open("exceptional_sets_table.tex", "w", encoding="utf-8") as f:
        f.write(exceptional_table)
    print("   Saved to: exceptional_sets_table.tex")

    # 2. Generate flanking table
    print("\n2. Computing flanked values using theoretical criterion...")
    flanking_table = generate_flanking_table(max_p=10000)
    with open("flanked_values_table.tex", "w", encoding="utf-8") as f:
        f.write(flanking_table)
    print("   Saved to: flanked_values_table.tex")

    # 3. Generate source code quine
    print("\n3. Generating quine Appendix...")
    source_listing = generate_source_appendix()
    with open("source_code_appendix.tex", "w", encoding="utf-8") as f:
        f.write(source_listing)
    print("   Saved to: source_code_appendix.tex")

    print("\n" + "=" * 60)
    print("COMPLETE")
    print("=" * 60)
    print("\nTo include in LaTeX document:")
    print("  \\\")
    print("  \\\")
    print("  \\\")

if __name__ == "__main__":
    main()

\end{lstlisting}

\end{document}